\newtheorem{thm}{Theorem}[section]
\newtheorem{lem}[thm]{Lemma}
\newtheorem{cor}[thm]{Corollary}
\theoremstyle{definition}			                						
\newtheorem{mydef}[thm]{Definition}
\newtheorem{rem}[thm]{Remark}
\newtheorem{ex}[thm]{Example}
\numberwithin{equation}{section}		            					
\newcommand{\bb}[1]{\mathbb{#1}}								
\newcommand{\mat}[2]{{M}_{#1}(#2)}							
\newcommand{\gl}[2]{{GL}_{#1}(#2)}								
\newcommand{\inv}[1]{#1^{-1}}								
\newcommand{\diag}[1]{\operatorname{\rm diag}\left( #1 \right)}			
\newcommand{\coni}[1]{\operatorname{\rm coni}\left( #1 \right)}			
\newcommand{\cone}[1]{\mathcal{C}\left( #1 \right)}					
\newcommand{\rc}[1]{\mathcal{C}_r\left( #1 \right)}					
\newcommand{\hyp}[2]{#1 \hyperref[#2]{\ref*{#2}}}					
\begin{document}

\title{Row Cones, Perron Similarities, and Nonnegative Spectra}
\author{
	\name{Charles R. Johnson\textsuperscript{a} and Pietro Paparella\textsuperscript{b}$^{\ast}$\thanks{$^\ast$Corresponding author. Email: pietrop@uw.edu}}
	\affil{\textsuperscript{a}Department of Mathematics, College of William and Mary, Williamsburg, VA 23187-8795, USA
	\textsuperscript{b}Division of Engineering and Mathematics, University of Washington Bothell, Bothell, WA 98011-8246, USA}}

\maketitle

\begin{abstract}
In further pursuit of the diagonalizable \emph{real nonnegative inverse eigenvalue problem} (RNIEP), we study the relationship between the \emph{row cone} $\rc{S}$ and the \emph{spectracone} $\cone{S}$ of a Perron similarity $S$. In the process, a new kind of matrix, \emph{row Hadamard conic} (RHC), is defined and related to the D-RNIEP. Characterizations are given when $\rc{S} = \cone{S}$, and explicit examples are given for all possible set-theoretic relationships between the two cones. The symmetric NIEP is the special case of the D-RNIEP in which the Perron similarity $S$ is also orthogonal.
\end{abstract}

\begin{keywords} 
Perron similarity, Row Hadamard conic, Real nonnegative inverse eigenvalue problem, Hadamard product, Row cone, Spectra-cone
\end{keywords}

\begin{classcode}
Primary 15A18, 15A29, 15B48
\end{classcode}

\date{\today}

\pagenumbering{arabic}

\section{Introduction}

In \cite{jp2016}, the notion of a \emph{Perron similarity} was introduced and explored. Here, by a Perron similarity, we mean a matrix $S \in \gl{n}{\bb{R}}$ such that there is a real, diagonal, nonscalar matrix $D$ with $S D \inv{S} \geq 0$ (all inequalities in this work apply to all entries of a matrix or vector). Of course, the diagonal entries of $D$ then constitute a particular instance of the \emph{real nonnegative inverse eigenvalue problem} (RNIEP). The RNIEP is a variant of the \emph{nonegative inverse eigenvalue problem} (NIEP), one of the premier unsolved problems in matrix analysis (see, e.g., \cite{c1998}, \cite{eln2004}, or \cite{m1988}).

Perron similarities were characterized in several ways in \cite{jp2016} and it was shown that
\[ \cone{S}:= \{ x \in \bb{R}^n: S \diag{x} \inv{S} \geq 0 \} \]
is a \emph{polyhedral cone}, i.e., a convex cone in $\bb{R}^n$ with finitely-many extremals. This was called the \emph{spectracone of $S$}, and a certain cross-section, a polytope called the \emph{spectratope of $S$}, was also discussed. Of course, taken together, the spectracones of all Perron similarities constitute the solution to the \emph{diagonalizable RNIEP} (D-RNIEP), another challenging variant that is also unsolved.

In \cite{jp2016}, it was noted that $\cone{S}$ can coincide with $\rc{S}$, the convex cone generated by the rows of $S$. This occurs, for example, for a certain special family of \emph{Hadamard matrices} called \emph{Walsh matrices}. Obviously, when there is a clear relationship with $\rc{S}$, it helps to determine $\cone{S}$. Our purpose here is to identify the possible relationships between $\cone{S}$ and $\rc{S}$, and, where possible, to characterize how they occur. This is a natural step in understanding more deeply Perron similarities and their role in the D-RNIEP. Without explicitly giving the statements, we note that the symmetric NIEP (SNIEP) is the special case of our results in which the Perron similarity $S$ is orthogonal.

In the next section, we make and record some simple observations that underlay this work. Then, in section 3, we give sufficient conditions and characterizations of containment in either direction between the cones $\rc{S}$ and $\cone{S}$. This facilitates characterization of when the two cones are equal. In section 4 are indicated examples of the remaining possible set-theoretic relationships between the two cones $\rc{S}$ and $\cone{S}$. In the course of this work, a new kind of matrix, that we call \emph{row Hadamard conic} (RHC), naturally arises, and we suspect that these may be of interest on their own.

\section{Useful Observations}

A simple fact about row-cones is the following.

\begin{lem}
\label{lemtwoone}
For $S \in \gl{n}{\bb{R}}$, the row-vector $x^\top \in \rc{S}$ if and only if $x^\top \inv{S} \geq 0$. 
\end{lem}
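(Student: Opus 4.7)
The plan is to unwind the definition of $\rc{S}$ as a convex cone and use invertibility of $S$ to recover the coefficients uniquely. Writing $\rc{S}$ as the set of nonnegative combinations of the rows of $S$, I would observe that a row vector $x^\top$ belongs to $\rc{S}$ precisely when there exist nonnegative scalars $c_1,\dots,c_n$ such that $x^\top = \sum_{i=1}^n c_i s_i^\top$, where $s_i^\top$ denotes the $i$-th row of $S$. Collecting the $c_i$ into a row vector $c^\top$, this is exactly the matrix equation $x^\top = c^\top S$ with $c \geq 0$.

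Next, I would exploit the hypothesis $S \in \gl{n}{\bb{R}}$. Since $S$ is invertible, the equation $x^\top = c^\top S$ admits the unique solution $c^\top = x^\top \inv{S}$. Therefore the existence of a nonnegative $c$ certifying $x^\top \in \rc{S}$ is equivalent to the specific vector $x^\top \inv{S}$ itself being nonnegative.

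Putting these two steps together yields the equivalence: $x^\top \in \rc{S}$ if and only if $x^\top \inv{S} \geq 0$. There is no substantive obstacle; the statement is essentially a restatement of the definition of $\rc{S}$ combined with the observation that invertibility of $S$ makes the coefficient vector unique, so the sign condition on the coefficients transfers verbatim to the sign condition on $x^\top \inv{S}$. The only matter requiring a moment of care is bookkeeping with row versus column vectors, to ensure that the product $x^\top \inv{S}$ is the correct one and that nonnegativity is interpreted entrywise in the row vector sense.
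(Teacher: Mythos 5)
Your proof is correct and follows exactly the paper's argument: both identify membership in $\rc{S}$ with the existence of a nonnegative $c$ satisfying $x^\top = c^\top S$, then use invertibility of $S$ to conclude that the unique such $c^\top$ is $x^\top \inv{S}$, so the nonnegativity condition transfers directly. No further comment is needed.
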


\begin{proof}
Of course, $x^\top \in \rc{S}$ if and only if $x^\top = y^\top S$ for some $y \in \bb{R}$ with $y \geq 0$. Since $S$ is invertible, $x^\top \inv{S} = y^\top \geq 0 $ is unique.
\end{proof}

For a given matrix $A$, we denote the $i$\textsuperscript{th}-row of $A$ by $r_i(A)$ (when the context is clear, $r_i(A)$ is abbreviated to $r_i$). Denote by $e$ and $e_i$ the all-ones vector and the $i$\textsuperscript{th} canonical basis vector, respectively. Recall that $\circ$ denotes the Hadamard (entrywise) product of two vectors or matrices of the same size. 

A key observation for our work is the following.

\begin{lem}
\label{lemtwotwo}
Let $S \in \gl{n}{\bb{R}}$. If $R_i := \diag{r_i}$,  then $S R_i \inv{S} \geq 0$, $\forall i \in \{ 1, \dots, n \}$, if and only if $r_i \circ r_j \in \rc{S}$, for every $j \in \{1, \dots, n\}$.
\end{lem}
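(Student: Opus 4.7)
The plan is to reduce the matrix nonnegativity condition $SR_iS^{-1} \geq 0$ to a row-by-row statement, and then translate each row statement into a membership statement in $\rc{S}$ via Lemma \ref{lemtwoone}. Everything will hinge on a single computation identifying the rows of $SR_iS^{-1}$.

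First, I would fix $i$ and compute the rows of $SR_i$. Since $R_i = \diag{r_i}$, for each $k$ the $(k,j)$-entry of $SR_i$ is $S_{kj}(r_i)_j = S_{kj}S_{ij}$, so $r_k(SR_i) = r_k \circ r_i$. Right-multiplying by $\inv{S}$ preserves rows, hence
\[ r_k\bigl(SR_i\inv{S}\bigr) \;=\; (r_k \circ r_i)\,\inv{S}. \]
Thus $SR_i\inv{S} \geq 0$ is equivalent to $(r_k \circ r_i)\inv{S} \geq 0$ for every $k \in \{1,\dots,n\}$.

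Next, I would invoke Lemma \ref{lemtwoone} applied to $x = r_k \circ r_i$: the condition $(r_k \circ r_i)\inv{S} \geq 0$ is equivalent to $r_k \circ r_i \in \rc{S}$. Combining the two equivalences, $SR_i\inv{S} \geq 0$ iff $r_k \circ r_i \in \rc{S}$ for every $k$. Quantifying over $i$ then yields the stated biconditional (with the roles of $i$ and $j$ swapped by symmetry of the Hadamard product).

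There is essentially no obstacle here beyond careful bookkeeping: the entire argument is the observation that multiplication on the right by $\diag{r_i}$ rescales column $j$ by $S_{ij}$, so that the rows of $SR_i$ are exactly the Hadamard products $r_k \circ r_i$. The rest is a direct appeal to Lemma \ref{lemtwoone}.
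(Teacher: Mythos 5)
Your proposal is correct and follows essentially the same route as the paper's own proof: identify the rows of $SR_i$ as the Hadamard products $r_i \circ r_k$, reduce nonnegativity of $SR_i\inv{S}$ to nonnegativity of each row times $\inv{S}$, and apply Lemma \ref{lemtwoone}. The only difference is that you spell out the entrywise computation $(SR_i)_{kj} = S_{kj}S_{ij}$ explicitly, which the paper leaves as an observation.
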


\begin{proof}
View $S R_i \inv{S}$ as $(S R_i) \inv{S}$ and note that the rows of $S R_i$ are the Hadamard products $r_i \circ r_j$, $j \in \{1, \dots, n\}$. But, each matrix $S R_i \inv{S}$ is nonnegative if and only if each row of it is nonnegative. The rows of $S R_i \inv{S}$ are just the rows of $S R_i$, each multiplied into $\inv{S}$. This means that $(r_i \circ r_j) \inv{S} \geq 0$, for $j=1,\dots,n$, and by \hyp{Lemma}{lemtwoone} that $r_i \circ r_j \in \rc{S}$, $j = 1, \dots,n$.
\end{proof}

The above leads naturally to the following important definition of a new concept that seems interesting by itself.

\begin{mydef}
The matrix $S \in \mat{m,n}{\bb{R}}$ is called \emph{row Hadamard conic} (RHC) if 
\[ r_i \circ r_j \in \rc{S},~\forall i,j \in \{1,\dots,m\}. \]
\end{mydef}

\begin{rem}
Since $\rc{S}$ is the cone generated by the rows of $S$, it follows quickly from the definition that the Hadamard product of any number of rows (repeats allowed) of an RHC matrix $S$ also lies in $\rc{S}$.
\end{rem}

\begin{ex}
Of course, $S$ is RHC if the rows themselves are closed under the Hadamard product. This does happen for certain Hadamard matrices (see \cite{jp2016} for a discussion and implications); invertible van der Monde matrices that diagonalize nonnegative companion matrices; and all permutation matrices.

As a particular example of a RHC matrix, we appeal to a well-known example from the literature: recall that a finite list of real numbers is called a \emph{Sule\u{\i}manova spectrum} if it contains exactly one positive value. In \cite{s1949}, it was loosely shown that every such list is the spectrum of a nonnegative matrix. Friedland \cite{f1978} and Perfect \cite{p1953} proved Sule\u{\i}manova's result via companion matrices (for other proofs, see references in \cite{f1978}). Recently, a constructive proof of this result was given in \cite{p2016}. 

Consider the list $(1,-a,-b)$, with $0 \leq a,b < 1$. The Vandermonde matrix
\[ 
S = 
\begin{bmatrix}
1 & 1 & 1 \\
1 & -a & -b \\
1 & a^2 & b^2
\end{bmatrix} \]
is RHC since every row is realizable (\hyp{Lemma}{lemtwotwo}).
\end{ex}

Both $\cone{S}$ and $\rc{S}$ are unchanged, or changed predictably, by certain basic matrix transforms of $S$. It is useful to have these available. In \hyp{Table}{tab}, $P$ denotes a permutation matrix, $D_+$ a positive diagonal matrix, and $D$ an invertible diagonal matrix. Each case is either known or straightforward to verify.
\begin{table}[H]
\centering
\begin{tabular}{c | c | c}
 & effect on $\cone{S}$ & effect on $\rc{S}$ \\
 \hline
 $PS$ & none & none \\
 $D_+ S$ & none & none \\
 $SP$ & permutation of spectra only & cone is permuted entrywise \\
 $SD$ & none & cone entries are re-signed and relatively-scaled 
 \end{tabular}
 \caption{Effects on $\cone{S}$ and $\rc{S}$ of matrix transformations.}
 \label{tab}
\end{table}

\section{Containment Results}

For a certain Perron similarity $S$, we wish to describe all possibilities for the relationship between $\rc{S}$ and $\cone{S}$, at least by example. First, we characterize containment in one direction. 

\begin{thm}
\label{thmthreeone}
If $S \in \gl{n}{\bb{R}}$ is a Perron similarity, then $\rc{S}\subseteq\cone{S}$ if and only if $S$ is RHC.
\end{thm}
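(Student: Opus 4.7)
The plan is to reduce both directions to \hyp{Lemma}{lemtwotwo} by observing that the condition $S R_i \inv{S} \geq 0$ is precisely the statement $r_i \in \cone{S}$. Thus \hyp{Lemma}{lemtwotwo} rephrases as: every row of $S$ belongs to $\cone{S}$ if and only if $S$ is RHC. With this dictionary in place, the theorem becomes almost tautological, and the only real content is extending row-membership to full cone-containment via linearity of the map $x \mapsto S \diag{x} \inv{S}$.

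For the forward direction, I would assume $\rc{S} \subseteq \cone{S}$ and observe that each row $r_i$ lies trivially in $\rc{S}$ (take the nonnegative combination with a single unit coefficient), so $r_i \in \cone{S}$ for every $i$. This means $S R_i \inv{S} \geq 0$ for all $i$, and \hyp{Lemma}{lemtwotwo} immediately gives $r_i \circ r_j \in \rc{S}$ for all $i,j$, i.e., $S$ is RHC.

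For the reverse direction, suppose $S$ is RHC. Then \hyp{Lemma}{lemtwotwo} yields $S R_i \inv{S} \geq 0$ for each $i$, equivalently $r_i \in \cone{S}$ for each $i$. Now take an arbitrary $x \in \rc{S}$ and write $x = \sum_{i=1}^n y_i r_i$ with $y_i \geq 0$. The key identity is
\[ \diag{x} = \sum_{i=1}^n y_i \diag{r_i} = \sum_{i=1}^n y_i R_i, \]
from which
\[ S \diag{x} \inv{S} = \sum_{i=1}^n y_i \, S R_i \inv{S} \geq 0, \]
since it is a nonnegative combination of nonnegative matrices. Hence $x \in \cone{S}$, which gives the containment $\rc{S} \subseteq \cone{S}$.

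There is no real obstacle: the substantive work was already done in \hyp{Lemma}{lemtwotwo}, and the only subtlety is the bookkeeping between row vectors (elements of $\rc{S}$) and the diagonal entries they produce. The linearity step in the reverse direction is the single nontrivial observation, but it is immediate once $\diag{\cdot}$ is recognized as linear in its argument.
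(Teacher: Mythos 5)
Your proof is correct and follows essentially the same route as the paper: both directions reduce to the observation that $r_i \in \cone{S}$ exactly when $S R_i \inv{S} \geq 0$, combined with \hyp{Lemma}{lemtwotwo}. The only difference is that you spell out, via linearity of $\diag{\cdot}$, why membership of the generators $r_i$ in $\cone{S}$ suffices for the full containment, where the paper compresses this into the remark that both sets are cones.
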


\begin{proof}
Since both are cones, the claimed containment is valid if each $r_i$ lies in $\cone{S}$. The import of \hyp{Lemma}{lemtwotwo} is that this happens for a particular $r_i$ exactly when the Hadamard product condition is met for all rows with $r_i$. But RHC means that this condition is met for every $i$, completing the proof.
\end{proof}

What about containment the other way? Here, we have a sufficient condition.

\begin{thm}
\label{thmthreetwo}
Let $S \in \gl{n}{\bb{R}}$ be a Perron similarity. If some row of $S$ is $e^\top$, then $\cone{S} \subseteq \rc{S}$.
\end{thm}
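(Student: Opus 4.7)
The plan is to exploit the hypothesis that $e^\top$ appears as a row of $S$ by reading off a specific row of the product $S\diag{x}\inv{S}$ for an arbitrary $x \in \cone{S}$, and then to invoke \hyp{Lemma}{lemtwoone} to convert the resulting nonnegativity condition into membership in $\rc{S}$.

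More concretely, I would start by fixing $x \in \cone{S}$, so that by definition $S\diag{x}\inv{S} \geq 0$. I would then identify the index $k \in \{1,\dots,n\}$ for which $r_k(S) = e^\top$. The key computational observation is that
\[
r_k(S\diag{x}) \;=\; e^\top \diag{x} \;=\; x^\top,
\]
so the $k$-th row of the nonnegative matrix $(S\diag{x})\inv{S}$ is precisely $x^\top \inv{S}$. Nonnegativity of the whole matrix forces $x^\top \inv{S} \geq 0$, and \hyp{Lemma}{lemtwoone} immediately upgrades this to $x^\top \in \rc{S}$, which is exactly what is needed.

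Since $x$ was an arbitrary element of $\cone{S}$, the containment $\cone{S} \subseteq \rc{S}$ follows. The argument uses the Perron similarity hypothesis only indirectly, in that it ensures $\cone{S}$ is a well-defined (nontrivial) object; the structural work is done entirely by the special row $e^\top$, which singles out one row of $S\diag{x}\inv{S}$ whose nonnegativity is equivalent to $x^\top \in \rc{S}$ via \hyp{Lemma}{lemtwoone}. There is no real obstacle here: the proof is essentially a one-line consequence of the row-extraction trick combined with \hyp{Lemma}{lemtwoone}, and the only thing to be careful about is matching row/column conventions when identifying $x$ with $x^\top$.
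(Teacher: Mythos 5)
Your proposal is correct and follows essentially the same route as the paper: both identify the row of $S\diag{x}$ corresponding to $e^\top$ as $x^\top$ itself (the paper phrases this as $x^\top \circ e = x^\top$), extract $x^\top \inv{S} \geq 0$ from the nonnegativity of $S\diag{x}\inv{S}$, and conclude via \hyp{Lemma}{lemtwoone}. No gaps.
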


\begin{proof}
Suppose that $x^\top$, viewed as a row-vector, satisfies $x^\top \in \cone{S}$, and let $X = \diag{x}$. Since $e^\top$ is a row of $S$, $x^\top \circ e = x^\top$ is a row of $SX$, and from $SX\inv{S} \geq 0$, we have $x^\top \inv{S} \geq 0$. From \hyp{Lemma}{lemtwoone}, this means that $x^\top \in \rc{S}$, as was to be shown.
\end{proof}

\begin{rem}
\label{remthreethree}
More generally, but less concretely, we see in the same way that $\cone{S} \subseteq \rc{S}$ if and only if every spectral vector $x^\top \in \cone{S}$ satisfies $x^\top \inv{S} \geq 0$, or, equivalently, each of the finite number of extremals $y^\top$ of $\cone{S}$ satisfies $y^\top \inv{S} \geq 0$.
\end{rem}

\begin{rem}
It follows from \hyp{Theorem}{thmthreeone} that if $x^\top$ occurs as a row of a Perron similarity that is RHC, then the components of $x$ form a nonnegative spectrum. This may provide a method to verify that a spectrum is nonnegative. We do not know if this happens for every nonnegative spectrum.
\end{rem}

With a weaker condition on $S$, we have, as a consequence, containment of $\cone{S}$ in a slightly different cone. As usual, denote by $R_i$ the diagonal matrix $\diag{r_i}$.

\begin{cor}
\label{corthreefour}
Let $S \in \gl{n}{\bb{R}}$ be a Perron similarity and suppose that $r_i$ is a totally nonzero row of $S$. Then $\cone{S} \subseteq \rc{S \inv{R_i}}$. 
\end{cor}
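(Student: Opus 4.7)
My plan is to reduce this corollary to \hyp{Theorem}{thmthreetwo} by right-multiplying $S$ by $\inv{R_i}$, which is well-defined precisely because $r_i$ is totally nonzero. Let $S' := S \inv{R_i}$. The $i$\textsuperscript{th} row of $S'$ is $r_i \inv{R_i}$, whose $k$\textsuperscript{th} entry is $(r_i)_k / (r_i)_k = 1$; so $r_i(S') = e^\top$.

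Next I would verify two things: that $S'$ is a Perron similarity, and that $\cone{S'} = \cone{S}$. Both follow from the row-operation entry of \hyp{Table}{tab} ($SD$ with $D$ invertible diagonal), but they can also be seen directly: for any diagonal $X = \diag{x}$, the diagonal matrices $X$ and $R_i$ commute, so
\[ S' X \inv{(S')} = S \inv{R_i} X R_i \inv{S} = S X \inv{S}, \]
which is nonnegative for exactly the same set of $x$'s. In particular, if $S$ is a Perron similarity then so is $S'$, and $\cone{S'} = \cone{S}$.

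Now \hyp{Theorem}{thmthreetwo} applies to $S'$, since $S'$ is a Perron similarity one of whose rows is $e^\top$. It yields $\cone{S'} \subseteq \rc{S'}$, which, translating back, is exactly $\cone{S} \subseteq \rc{S \inv{R_i}}$, as desired.

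There is really no hard step here; the only thing to check is the routine observation that rescaling columns of $S$ by a nonzero diagonal matrix leaves the spectracone invariant (so we do not lose the Perron similarity hypothesis) while producing an all-ones row exactly where the totally nonzero row used to be, putting us in position to invoke \hyp{Theorem}{thmthreetwo}.
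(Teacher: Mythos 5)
Your proof is correct and follows essentially the same route as the paper: pass to $S\inv{R_i}$, observe its $i$\textsuperscript{th} row is $e^\top$, note that the spectracone is unchanged, and invoke \hyp{Theorem}{thmthreetwo}. The only difference is that you spell out the (correct) commuting-diagonal computation showing $\cone{S\inv{R_i}} = \cone{S}$, which the paper leaves to \hyp{Table}{tab}.
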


\begin{proof}
Since $r_i(S \inv{R_i}) = e^\top$, we may apply \hyp{Theorem}{thmthreetwo} to $S \inv{R_i}$. But $\cone{S} = \cone{S \inv{R_i}}$, while $\rc{S \inv{R_i}}$ need not be $\rc{S}$.
\end{proof}

\begin{rem}
Of course, $S$ may have several nonnzero rows, and \hyp{Corollary}{corthreefour} may be applied to each to narow the containment of $\cone{S}$. Thus, 
\[ \cone{S} \subseteq \bigcap_{i:r_i~\text{is totally nonzero}} \rc{S\inv{R_i}}. \]
\end{rem}

We may now describe when $\cone{S} = \rc{S}$. 

\begin{thm}
Let $S \in \gl{n}{\bb{R}}$ be a Perron similarity and suppose that $r_i = e^\top$ for some $i \in \{1, \dots, n\}$. Then $\cone{S} = \rc{S}$ if and only if $S$ is RHC.
\end{thm}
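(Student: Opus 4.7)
The plan is to get this equivalence as an almost immediate corollary of \hyp{Theorem}{thmthreeone} and \hyp{Theorem}{thmthreetwo}, since the hypothesis combines precisely the ingredients those two results need.

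For the forward direction, I would assume $\cone{S} = \rc{S}$. In particular, $\rc{S} \subseteq \cone{S}$, and \hyp{Theorem}{thmthreeone} then gives that $S$ is RHC. No further work is needed.

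For the converse, I would assume $S$ is RHC. Applying \hyp{Theorem}{thmthreeone} once more yields $\rc{S} \subseteq \cone{S}$. The hypothesis that some row of $S$ equals $e^\top$ is exactly the standing assumption of \hyp{Theorem}{thmthreetwo}, which supplies the reverse inclusion $\cone{S} \subseteq \rc{S}$. Combining the two containments closes the argument.

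There is no real obstacle here: all the content is encapsulated in the two preceding theorems, and the statement merely records that when both of their hypotheses are in force simultaneously, the two cones coincide. The only thing to watch is that the assumption $r_i = e^\top$ genuinely plays a role only in the converse direction (through \hyp{Theorem}{thmthreetwo}); the forward direction is valid for any Perron similarity, and I would point this out briefly in the write-up for clarity.
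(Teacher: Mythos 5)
Your proposal is correct and is exactly the argument the paper gives (the paper's proof is the one-line ``combine the containments in Theorems \ref{thmthreeone} and \ref{thmthreetwo}''); you have simply spelled out which theorem supplies which inclusion in each direction. Your observation that the hypothesis $r_i = e^\top$ is used only for the converse is accurate and a reasonable clarification.
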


\begin{proof}
Combine the containments in Theorems \hyperref[thmthreeone]{\ref*{thmthreeone}} and \hyperref[thmthreetwo]{\ref*{thmthreetwo}}.
\end{proof}

Alternately, we have, based upon \hyp{Lemma}{lemtwotwo}, the following.
\begin{thm}
Let $S \in \gl{n}{\bb{R}}$ be a Perron similarity and suppose that $r_i = e^\top$ for some $i \in \{1, \dots, n\}$. Then $\cone{S} = \rc{S}$ if and only if $r_i \in \cone{S}$ for every $i \in \{1,\dots,n\}$.
\end{thm}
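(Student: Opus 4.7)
The forward direction is essentially a tautology: each row $r_i$ lies in $\rc{S}$ as a nonnegative (in fact unit coefficient) combination of rows of $S$, so under the assumption $\cone{S} = \rc{S}$ we immediately get $r_i \in \cone{S}$ for every $i$. No appeal to the hypothesis that some row equals $e^\top$ is needed here.

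For the converse, the plan is to establish the two containments separately. Since $S$ has $e^\top$ as a row by hypothesis, \hyp{Theorem}{thmthreetwo} applies directly and delivers $\cone{S} \subseteq \rc{S}$ without any further argument. The work lies in the reverse inclusion, where I would invoke \hyp{Theorem}{thmthreeone}: it suffices to check that $S$ is RHC.

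To see that $S$ is RHC under the assumption that $r_i \in \cone{S}$ for all $i$, I would unravel the definition of $\cone{S}$. By definition, $r_i \in \cone{S}$ is exactly the statement $S R_i S^{-1} \geq 0$, where $R_i = \diag{r_i}$. Applying \hyp{Lemma}{lemtwotwo} then yields $r_i \circ r_j \in \rc{S}$ for every $j$. Running $i$ through $\{1,\dots,n\}$ gives the RHC condition in full, which via \hyp{Theorem}{thmthreeone} supplies the missing inclusion $\rc{S} \subseteq \cone{S}$.

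The only step that requires care is recognizing that \hyp{Lemma}{lemtwotwo} is really a per-row statement: for a fixed $i$, the condition $S R_i S^{-1} \geq 0$ corresponds precisely to $r_i \circ r_j \in \rc{S}$ for every $j$. Once that is in hand, the proof is just a bookkeeping assembly of \hyp{Lemma}{lemtwotwo}, \hyp{Theorem}{thmthreeone}, and \hyp{Theorem}{thmthreetwo}, so I do not anticipate a genuine obstacle; the substantive content has been done in the earlier results.
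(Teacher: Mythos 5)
Your proof is correct and is essentially the argument the paper intends: the paper states this theorem without a written proof, remarking only that it follows from Lemma \ref{lemtwotwo}, and your assembly of Lemma \ref{lemtwotwo} (read per row, as you rightly emphasize) with Theorems \ref{thmthreeone} and \ref{thmthreetwo} is exactly that. The only simplification available is that the inclusion $\rc{S} \subseteq \cone{S}$ follows directly from $r_i \in \cone{S}$ for all $i$, since $\cone{S}$ is a convex cone containing the generators of $\rc{S}$, so the detour through the RHC property is not strictly needed.
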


Finally, we have, based upon \hyp{Remark}{remthreethree}, the following.

\begin{thm}
Let $S \in \gl{n}{\bb{R}}$ be a Perron similarity. Then $\cone{S} = \rc{S}$ if and only if $S$ is RHC and the extremals $y$ of $\cone{S}$ satisfy $y^\top \inv{S} \geq 0$.
\end{thm}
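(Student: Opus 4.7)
The plan is to prove this as a direct combination of \hyp{Theorem}{thmthreeone} and \hyp{Remark}{remthreethree}, obtaining each of the two containments $\rc{S}\subseteq\cone{S}$ and $\cone{S}\subseteq\rc{S}$ separately. No new machinery appears to be needed; the key is to recognize that the two hypotheses correspond exactly to these two one-sided containments.

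For the forward direction, I would assume $\cone{S}=\rc{S}$. The containment $\rc{S}\subseteq\cone{S}$ together with \hyp{Theorem}{thmthreeone} immediately gives that $S$ is RHC. For the extremal condition, take any extremal $y$ of $\cone{S}$; since $\cone{S}=\rc{S}$, the vector $y^\top$ lies in $\rc{S}$, and \hyp{Lemma}{lemtwoone} yields $y^\top \inv{S}\geq 0$.

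For the reverse direction, assume $S$ is RHC and every extremal $y$ of $\cone{S}$ satisfies $y^\top\inv{S}\geq 0$. \hyp{Theorem}{thmthreeone} immediately supplies $\rc{S}\subseteq\cone{S}$. For the opposite containment, I would invoke the fact recorded at the beginning of the paper that $\cone{S}$ is a polyhedral cone, so each $x\in\cone{S}$ is a nonnegative combination $x=\sum_k c_k y_k$ of extremals. By hypothesis and \hyp{Lemma}{lemtwoone}, each $y_k^\top\in\rc{S}$, and since $\rc{S}$ is a convex cone, the combination $x^\top=\sum_k c_k y_k^\top$ also lies in $\rc{S}$. This yields $\cone{S}\subseteq\rc{S}$, completing the argument.

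There is no real obstacle here; the theorem is essentially a bookkeeping statement that packages \hyp{Theorem}{thmthreeone} and \hyp{Remark}{remthreethree} into a clean ``if and only if.'' The only subtlety worth flagging is the (standard) use of the polyhedrality of $\cone{S}$ to reduce the containment $\cone{S}\subseteq\rc{S}$ to checking it on the finitely many extremals, which is exactly the content of \hyp{Remark}{remthreethree}.
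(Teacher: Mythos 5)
Your proposal is correct and follows exactly the route the paper intends: the paper states this theorem with no written proof beyond the phrase ``based upon Remark \ref{remthreethree},'' and your argument simply combines Theorem \ref{thmthreeone} with Remark \ref{remthreethree} (filling in the standard polyhedrality/extremal-decomposition detail and the appeal to Lemma \ref{lemtwoone} that the remark leaves implicit). No differences worth noting.
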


\section{Possible Relationships between $\cone{S}$ and $\rc{S}$}

Based upon the results of the prior section, our two cones may be equal or one contained in the other. And since the indicated conditions can be met, it is clear that these may occur. But, we wish to indicate here, by example, what else may occur. 

Denote by $\coni{v_1,\dots,v_p}$ the \emph{conical hull} of $v_1, \dots, v_p \in \bb{R}^n$, i.e., 
\[ \coni{v_1,\dots,v_p} = \left\{ \sum_{i=1}^ p \alpha_i v_i : \alpha_i \geq 0,~\forall i \in \{1,\dots,p\} \right\}. \]

\begin{ex}
Each of $\rc{S}$ and $\cone{S}$ may be properly contained in the other.
\begin{enumerate}[label=\alph*)]
\item If 
\[ S= \begin{bmatrix} 1 & 1 \\ 0 & -1 \end{bmatrix}~\text{and}~D=\begin{bmatrix} x & 0 \\ 0 & y \end{bmatrix}, \]
then 
\[ A := SD\inv{S} = \begin{bmatrix} x & x-y \\ 0 & y \end{bmatrix}, \] 
and $A \geq 0$ if and only if $x$, $y \geq 0$ and $y \leq x$. Thus, $\cone{S} = \coni{e,e_1} \subset \coni{e,-e_2} = \rc{S}$.

\item If 
\[ S= \begin{bmatrix} 1 & .1 \\ 1 & -.1 \end{bmatrix}~\text{and}~D=\begin{bmatrix} x & 0 \\ 0 & y \end{bmatrix}, \]
then 
\[ A := SD\inv{S} = \frac{1}{2} \begin{bmatrix} x+y & x-y \\ x-y & x+y \end{bmatrix}, \] 
and $A \geq 0$ if and only if $y \leq -x$ and $y \leq x$. Thus, $\rc{S} \subset \coni{e,[1,-1]^\top} = \cone{S}$.
\end{enumerate}
\end{ex}

\begin{ex}
The two cones intersect at the origin: if 
\[ S= \begin{bmatrix} -1 & -1 \\ -1 & 0 \end{bmatrix}~\text{and}~D=\begin{bmatrix} x & 0 \\ 0 & y \end{bmatrix}, \]
then 
\[ A := SD\inv{S} = \begin{bmatrix} y & x-y \\ 0 & x \end{bmatrix}, \] 
and $A \geq 0$ if and only if $x$, $y \geq 0$ and $y \leq x$. Thus, $\cone{S} = \coni{e,e_1}$, $\rc{S} = \coni{-e,-e_1}$, and $\cone{S} \cap \rc{S} = \{ 0 \}$.
\end{ex}

\begin{rem}
It is interesting to note that the matrices 
\[ \begin{bmatrix} 1 & 1 \\ 1 & 0 \end{bmatrix}~\text{and}~\begin{bmatrix} 1 & 1 \\ 0 & -1 \end{bmatrix} \] 
have the same spectracones, yet one cannot be obtained from the other via the operations listed in \hyp{Table}{tab}.
\end{rem}

\begin{ex} \label{ex44}
The two cones may be noncomparable, though intersecting. Consider the spectracone of  
 \[ S= \begin{bmatrix} 1 & 1 & 0\\ 1 & -.5 & 1 \\ 1 & -.5 & -1 \end{bmatrix}. \]
 A straightforward, but tedious, calculation reveals that $\cone{S} = \coni{v_1, v_2, v_3, v_4}$, with
 \[ v_1 =  \begin{bmatrix} 1 \\ 1 \\ 1 \end{bmatrix},~
 v_2 =  \begin{bmatrix} 1 \\ -.5 \\ .5 \end{bmatrix},~
 v_3 =  \begin{bmatrix} 1 \\ -.5 \\ -.5 \end{bmatrix},~\text{and}~ 
 v_4 =  \begin{bmatrix} 1 \\ 1 \\ -1 \end{bmatrix}. \]
Thus, $\rc{S} \not \subseteq \cone{S}$ and $\cone{S} \not \subseteq \rc{S}$. The cones $\rc{S}$ and $\cone{S}$ are depicted in \hyp{Figure}{spec}.
\begin{figure}[H]
\centering
\begin{tikzpicture}
\begin{axis}
[
view={55}{30},
xmin=0,
xmax=1.05,
ymin=-1.05,
ymax=1.05,
zmin=-1.05,
zmax=1.05,
z label style={rotate=-90},
xlabel=$x_1$,
ylabel=$x_2$,
zlabel=$x_3$]

\addplot3[dotted, thick,fill=orange,opacity=.5] coordinates{(1,1,1) (1,-.5,.5) (1,-.5,-.5) (1,1,-1) (1,1,1)};
\addplot3[dotted, thick,fill=orange,opacity=.2] coordinates{(0,0,0) (1,-.5,.5) (1,-.5,-.5) (0,0,0)};
\addplot3[dotted, thick,fill=orange,opacity=.2] coordinates{(0,0,0) (1,-.5,.5) (1,1,1) (0,0,0)};
\addplot3[dotted, thick,fill=orange,opacity=.2] coordinates{(0,0,0) (1,-.5,-.5) (1,1,-1) (0,0,0)};

\addplot3[thick,fill=red,opacity=.3] coordinates{(0,0,0) (1,-.5,1) (1,-.5,-1) (0,0,0)}; 
\addplot3[thick,fill=red,opacity=.1] coordinates{(0,0,0) (1,1,0) (1,-.5,1) (0,0,0)};
\addplot3[thick,fill=red,opacity=.5] coordinates{(1,1,0) (1,-.5,1) (1,-.5,-1)(1,1,0)};

\end{axis}
\end{tikzpicture}
\caption{The cones $\rc{S}$ (red) and $\cone{S}$ (orange) from \hyp{Example}{ex44}.}
\label{spec}
\end{figure}
\end{ex}

\bibliographystyle{gLMA}
\bibliography{master}

\end{document}